\title{A Busy Beaver Problem for Infinite-Time Turing Machines}
\author{Jamie Long and Lee J. Stanley}
\date{October 14th, 2013}
\newcommand{\di}{\displaystyle}
\newcommand{\ita}{\textit}
\newcommand{\n}{\mathbb{N}}
\newtheorem{cor}{Corollary}
\newtheorem{thm}{Theorem}
\theoremstyle{remark}
\newtheorem*{ndef}{Definition}
\newtheorem*{rem}{Remark}
\begin{document}
\maketitle

\begin{abstract}
This note introduces a generalization to the setting of infinite-time computation of the busy beaver problem from classical computability theory, and proves some results concerning the growth rate of an associated function. In our view, these results indicate that the generalization is both natural and promising.
\end{abstract}

\section*{Introduction}
In this note, we formulate an infinite-time analogue of the busy beaver problem from classical computability theory. Before doing so, we briefly recall some preliminary notions from both sorts of computability.

Recent years have witnessed an interest in extending the classical notions of computability (wherein all computations are performed in finite time) to ``infinitary'' computing machines which can perform ``supertasks'' (computations requiring potentially infinite time). Of the supertask machines that have been proposed, perhaps the most prominent are the infinite-time Turing machines (ITTMs), as first described by Hamkins and Lewis in \cite{Hamkins0}. In short, infinite-time Turing machines have ``hardware'' similar to that of classical Turing machines (i.e. they have a small number of one-way infinite tapes, a finite number of states, and a one-cell-wide head for reading and writing elements from a fixed finite alphabet [for our purposes, $\{0,1\}$]). Most importantly, they possess an additional ``Limit'' state which allows them to perform transfinite computations of ordinal length. More concretely, infinite-time Turing machines perform just as their finitary counterparts do during successor steps, while during limit steps, three things happen: (1) each cell is updated by the ``limsup'' convention, (2) the head moves to the left-hand side of the tape, and (3) the machine enters the Limit state.

(It is worth noting here that some sources, such as \cite{welch}, specify that at limit steps, each cell is updated by the ``liminf'' convention. This alternative ITTM model clearly has the same computational power as the original model.)

Rad\'o's busy beaver problem furnished one of the first known instances of explicitly defined functions which are not computable in the classical ``finitary'' sense. The problem is stated as follows: for every $n\in\n$, let $BB$-$n$ denote the collection of $n$-non-halting-state Turing machines which halt after receiving a blank (all-$0$) tape as input, and for every machine $M$ from $BB$-$n$, let $score(M)$ be the number of ones which remain on the tape after such a halting, and $time(M)$ the number of steps performed by $M$ along the way. We are then naturally interested in determining the values of the functions $\Sigma:\n\to\n$ and $S:\n\to\n$  which are defined thusly: \[\Sigma(n)=\max_{M\in BB\text{-}n} score(M)\] \[S(n)=\max_{M\in BB\text{-}n} time(M).\] It is readily seen that $S$ is not computable, as its computability would imply the decidability of the (undecidable) classical halting problem. $\Sigma$ is also noncomputable, but Rad\'o's argument is a bit trickier  (see \cite{rado}); after some appropriate technical modifications, we prove an analogous result in the infinite-time setting.

Finally, let us note that some authors define $BB$-$n$ to be the collection of $n$-non-halting-state Turing machines which, upon receiving a blank (all-$0$) tape as input, ultimately halt with an initial segment of ones at the start of the tape, and zeros in all subsequent cells. Rad\'o's argument that $\Sigma$ is noncomputable carries through in this setting. In the interest of avoiding certain technical issues, it is this definition of $BB$-$n$ which we will ultimately generalize.

\section*{Definitions}
Here, we extend one of the finitary busy beaver functions to the setting of ITTMs, setting out some helpful terminology and notation along the way. For the purposes of this discussion, we will restrict our attention to those ITTMs which possess three tapes, and shall follow \cite{Hamkins0}'s cue in referring to these tapes as ``input,'' ``output,'' and ``scratch.'' 

In line with previous work (e.g. \cite{welch}), we shall assume throughout that all ITTMs follow the unary convention for natural number inputs and outputs; more specifically, we code $n\in\n$ by writing ones to the first $n$ cells of the tape, and zeros on the remaining cells. We denote this coding of $n$ by $\widehat{n}$.

It is of course possible for an ITTM to write an infinite number of ones to its output tape before halting; towards generalizing the busy beaver function, $\Sigma$, it thus is natural for us to restrict our attention to those ITTMs which, upon receiving a blank tape as input, eventually halt with only finitely many ones on the output tape. 

To this end, we make an effective identification of $IT\text{-}\mathcal{C}$, the set of all infinite-time computable functions, with certain (partial) infinite-time computable functions from $\n$ to $\n$: given $f\in IT\text{-}\mathcal{C}$, let $f^{*}$ be the following (partial) infinite-time computable function from $\n$ to $\n$:  \[f^{*}(n)=\begin{cases} k &\text{ if $f(\widehat{n})=\widehat{k}$} \\ \text{undefined} & \text{ if there is no $k$ such that $f(\widehat{n})=\widehat{k}$.}\end{cases}\]

(From the point of view of descriptive set theory, this identification allows us to pass from infinite-time computable functions over a type $1$ product space (namely Cantor space) to infinite-time computable functions over a type $0$ product space (namely $\n$). In doing so, we will be able to appeal to useful results which hold in the type $0$ setting, most notably in our proof of Corollary 2.)

We can now extend $BB$-$n$ and $\Sigma$ to the infinite-time setting.

\begin{ndef}
Let \begin{align*}BB_{\infty}\text{-}n =\{f \in IT\text{-}\mathcal{C}\ |\ & f^{*}(0) \text{ is defined and } \\ & f \text{ is computable by an ITTM with at most } n \text{ non-halting, non-limit states}\}.\end{align*} Then we can define $\Sigma_{\infty}:\n\to\n$ in a natural way: \[\Sigma_{\infty}(n)=\max_{f\in BB_{\infty}\text{-}n} f^{*}(0).\]
\end{ndef}

In analyzing the growth rate of $\Sigma_{\infty}$, we will employ the following commonplace notation.

\begin{ndef}
Let $f,g:\n\to\n$. Then we  shall write ``$f(n)>^{*}g(n)$'' if $f(n)>g(n)$ for all sufficiently large $n\in\n$.
\end{ndef}

\begin{rem}
Colloquially, we say that $f$ ``grows faster than'' $g$. Also, it is worth mentioning that Rad\'o used the notation ``$f(n)>-g(n)$'' instead.
\end{rem}

\section*{Results}
We proceed to demonstrate that our definition of ``infinitary $\Sigma$'' holds promise by generalizing one of Rad\'o's most important (classical) busy beaver results.

\begin{thm}
For every $f \in IT\text{-}\mathcal{C}$ such that $f^*$ is total, $\Sigma_{\infty}(n) >^* f^*(n)$.
\end{thm}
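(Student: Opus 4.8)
The plan is to transport Radó's classical domination argument to the infinite-time setting, the crucial point being that a natural number can be written to the tape using a state count that is merely \emph{logarithmic} in the number written; this is what lets us defeat the additive constant that a naive composition would incur. I would first reduce to a monotone majorant of $f^{*}$. Given $f \in IT\text{-}\mathcal{C}$ with $f^{*}$ total, define $g:\n\to\n$ by $g(n)=\max_{i\le n}f^{*}(i)+1$. Because $f^{*}$ is total, an ITTM can, on input $\widehat{n}$, run the machine for $f$ successively on $\widehat{0},\ldots,\widehat{n}$ (each subcomputation halts), maintain a running maximum, and output $\widehat{g(n)}$; hence $g=h^{*}$ for some $h\in IT\text{-}\mathcal{C}$, and $g$ is nondecreasing with $g(n)>f^{*}(n)$ for all $n$. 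I fix once and for all an ITTM $M_{g}$ computing $g$ in this sense and let $c$ be its number of non-halting, non-limit states; note that $c$ is a constant independent of $n$.

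Next I would set up the compact writer. For each $m$ there is an ITTM $W_{m}$ that, started on a blank tape, writes $\widehat{m}$ (that is, $m$ ones) and halts, using at most $a\log_{2}m+b$ non-halting, non-limit states for absolute constants $a,b$: lay down the binary representation of $m$ with $O(\log m)$ states (a bounded number of states per bit), then run a fixed binary-to-unary converter, which is an ordinary finite computation and so costs only $O(1)$ states. For each $m$ I then form the machine $A_{m}$ that runs $W_{m}$ and afterward runs $M_{g}$ on the result. On blank input $A_{m}$ halts with $\widehat{g(m)}$, so $A_{m}^{*}(0)=g(m)$ is defined, and $A_{m}$ uses at most $N(m):=a\log_{2}m+b+c+O(1)$ non-halting, non-limit states, where the $O(1)$ term absorbs the interface (repositioning the head and ensuring the scratch and output tapes are clear before $M_{g}$ begins); the single Limit state and the halt state are shared and not counted. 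Since $BB_{\infty}\text{-}n$ contains every admissible machine with \emph{at most} $n$ such states, $A_{m}\in BB_{\infty}\text{-}N(m)$ and therefore $\Sigma_{\infty}(N(m))\ge g(m)$.

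It remains to invert this relation. Given $n$, choose $m$ as large as possible subject to $N(m)\le n$. Because $N(m)$ grows only logarithmically in $m$, exponential growth beats the linear bound $n$, so for all sufficiently large $n$ we may take $m\ge 2^{\,n-c-O(1)}\ge n$. Then $A_{m}\in BB_{\infty}\text{-}n$, and using that $g$ is nondecreasing we obtain
\[ \Sigma_{\infty}(n)\ge g(m)\ge g(n)>f^{*}(n), \]
which is precisely $\Sigma_{\infty}(n)>^{*}f^{*}(n)$.

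The main obstacle is exactly the step that makes the argument work: spending fewer than $n$ states while feeding a value $m\ge n$ into $g$. Naive unary writing would cost about $m$ states and yield only $\Sigma_{\infty}(n)\ge g(n-c')$, which need not exceed $f^{*}(n)$ when $f^{*}$ is strictly increasing; the logarithmic writer is what closes this gap. The remaining delicate point, to be checked rather than merely asserted, is that ``non-halting, non-limit'' state counts genuinely add under composition in the three-tape ITTM model, so that the interface between $W_{m}$ and $M_{g}$ (head repositioning, tape hygiene, and the single shared Limit state) costs only a constant number of states, independent of both $m$ and $n$.
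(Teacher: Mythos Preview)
Your argument is correct but takes a genuinely different route from the paper's. You gain the slack needed to overcome the additive constant by writing the seed $m$ in \emph{binary}, so the writer costs only $O(\log m)$ states, and then apply your monotone majorant $g$ once. The paper instead keeps Rad\'o's original device: it writes the seed $x$ in \emph{unary} (costing roughly $x$ states) but applies its majorant \emph{twice}, building a machine with $s(x)=x+h(C)$ states that outputs $F(F(x))$, where $F(x)=\sum_{i\le x}[f^{*}(i)+i]^{2}$ is strictly increasing with $F(x)\ge x^{2}$. The quadratic lower bound gives $F(x)>^{*}s(x)$, and monotonicity then yields $F(F(x))>^{*}F(s(x))\ge f^{*}(s(x))$. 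So the ``naive unary writing'' you dismissed actually does suffice, provided one composes the majorant with itself; that composition is exactly what rescues the estimate $\Sigma_{\infty}(n)\ge g(n-c')$ you flagged as inadequate. Your route is arguably cleaner---a single application of $g$, no need to run the same machine on two different tapes---while the paper's stays closer to Rad\'o's classical proof, at the cost of invoking a one-tape computability result so that the majorant can be run twice on separate tapes within the three-tape model.
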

\begin{rem}
Rad\'o demonstrated that for every finitarily computable function $f$, $\Sigma(n)>^{*}f(n)$. Note also that because $\Sigma$ is readily seen to be ITTM-computable, we can establish some astoundingly large lower bounds on $\Sigma_{\infty}$: for instance, $\Sigma_{\infty}(n)>^{*}\Sigma(\Sigma(n))$ and $\Sigma_{\infty}(n)>^{*}(\Sigma(n+42)^{\Sigma(\Sigma(n!))})$, to name but a couple.

(These bounds are all the more impressive in light of the fact that as of 2013, the exact value of $\Sigma(n)$ for $n\geq 5$ remains unknown.)
\end{rem}

Two important corollaries result from this theorem, the first of which is immediately clear, and the second of which requires some familiarity with the elementary definitions and properties of $\Sigma$-pointclasses and $\Sigma$-recursive functions (c.f. \cite{mosch}).

\begin{cor}
$\Sigma_{\infty}$ is not ITTM-computable.
\end{cor}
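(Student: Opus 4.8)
The plan is to argue by contradiction, exploiting the diagonal flavor already packaged into Theorem 1. Suppose toward a contradiction that $\Sigma_{\infty}$ were ITTM-computable. The first thing I would do is unwind what this means under the effective identification introduced just before Definition 1: an ITTM that computes the total function $\Sigma_{\infty}:\n\to\n$ in the unary convention is witnessed by some $f\in IT\text{-}\mathcal{C}$ whose associated partial function $f^{*}$ satisfies $f^{*}(n)=\Sigma_{\infty}(n)$ for every $n\in\n$. The only care required here is to confirm that the native notion ``some ITTM computes $\Sigma_{\infty}$'' genuinely yields such an $f$; but this is precisely the content of the identification of $IT\text{-}\mathcal{C}$ with the partial functions $f^{*}$, so it is routine bookkeeping.

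Next I would record that, since $\Sigma_{\infty}$ is total (it is defined outright as a function $\n\to\n$), the function $f^{*}$ agreeing with it on all of $\n$ is itself total. This places $f$ squarely inside the hypothesis of Theorem 1, namely that $f\in IT\text{-}\mathcal{C}$ with $f^{*}$ total.

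Then I would simply apply Theorem 1 to this particular $f$. It delivers $\Sigma_{\infty}(n)>^{*}f^{*}(n)$. Substituting $f^{*}=\Sigma_{\infty}$ gives $\Sigma_{\infty}(n)>^{*}\Sigma_{\infty}(n)$, that is, $\Sigma_{\infty}(n)>\Sigma_{\infty}(n)$ for all sufficiently large $n\in\n$, which is absurd. Hence no such $f$ exists, and $\Sigma_{\infty}$ is not ITTM-computable.

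As for the main obstacle: there is essentially none of substance, which is exactly why the authors flag this corollary as ``immediately clear.'' All the real work lives in Theorem 1, and the corollary is just the standard move of feeding a dominating function to itself. The only genuinely checkable point is the translation in the first step --- verifying that ITTM-computability of a \emph{total} function $\n\to\n$ indeed lands it within the class to which Theorem 1 applies --- and this is immediate from the definitions of $BB_{\infty}\text{-}n$ and of the identification $f\mapsto f^{*}$.
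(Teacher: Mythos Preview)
Your argument is correct and is precisely the intended one: the paper offers no separate proof beyond noting the corollary is ``immediately clear,'' and the standard diagonal move you spell out---feeding a putative ITTM-computation of $\Sigma_{\infty}$ into Theorem~1 to obtain $\Sigma_{\infty}(n)>^{*}\Sigma_{\infty}(n)$---is exactly what that phrase is pointing at.
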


\begin{cor}
Let $sD$ denote the collection of subsets of natural numbers which are $ITTM$-semi-decidable. Then for every $sD$-recursive $f:\n\to\n$, $\Sigma_{\infty}(n)>^{*}f(n)$. In particular, since $\Pi_{1}^{1}\subseteq sD$ (as established in \cite{Hamkins0}), a similar statement holds with ``$\Pi_{1}^{1}$-recursive'' in place of ``$sD$-recursive.''
\end{cor}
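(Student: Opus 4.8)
The plan is to transport Radó's classical domination argument into the infinite-time setting, the two genuinely new ingredients being a verification that the relevant ``padding'' function remains infinite-time computable and a careful accounting of non-halting, non-limit states. First I would replace $f^*$ by a non-decreasing majorant. Set
\[
G(n) = 1 + \max_{i \le 2n+1} f^*(i).
\]
Since $f^*$ is total and $f \in IT\text{-}\mathcal{C}$, and since $IT\text{-}\mathcal{C}$ (transported to $\n$ via $g \mapsto g^*$, as in the type $0$ remark above) is closed under composition, bounded maximization over a finite initial segment, and successor, there is some $H \in IT\text{-}\mathcal{C}$ with $H^* = G$; thus $G$ is the $*$-version of an infinite-time computable function. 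By construction $G$ is non-decreasing and satisfies both $G(n) > f^*(2n)$ and $G(n) > f^*(2n+1)$ for every $n$.

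Next I would, for each $n$, build a single ITTM $M_n$ that on blank input halts with exactly $\widehat{G(n)}$ on its output tape. The machine consists of an ``$n$-writer'' — a chain of $n$ states that writes $\widehat{n}$ onto the input tape — followed by a fixed driver $M_0$ computing $G$, i.e.\ sending $\widehat{n} \mapsto \widehat{G(n)}$. Here $M_0$ has a constant number of non-halting, non-limit states, and a constant number of additional states suffice to splice the two pieces together (for instance, to return the head to the left end before handing control to $M_0$). Writing the total non-halting, non-limit state count of $M_n$ as $n + c$ for a constant $c$ depending only on $f$, I note that the limit and halting states are \emph{not} counted, and that although $M_0$ may run for transfinitely many steps it halts on every input because $G$ is total. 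Consequently the function computed by $M_n$ lies in $BB_{\infty}\text{-}(n+c)$ and takes $*$-value $G(n)$ at $0$, whence $\Sigma_{\infty}(n+c) \ge G(n)$.

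Finally I would absorb the additive constant using the doubling built into $G$ together with the monotonicity of $\Sigma_{\infty}$. Monotonicity is immediate: any machine witnessing membership in $BB_{\infty}\text{-}n$ also witnesses membership in $BB_{\infty}\text{-}(n+1)$, so $\Sigma_{\infty}$ is non-decreasing. For $n \ge c$ we have $n + c \le 2n \le 2n+1$, so by monotonicity $\Sigma_{\infty}(2n) \ge \Sigma_{\infty}(n+c) \ge G(n) > f^*(2n)$, and likewise $\Sigma_{\infty}(2n+1) \ge \Sigma_{\infty}(n+c) \ge G(n) > f^*(2n+1)$. Thus $\Sigma_{\infty}(m) > f^*(m)$ for every $m \ge 2c$, which is exactly $\Sigma_{\infty}(n) >^{*} f^*(n)$.

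I expect the main obstacle to be the middle step: making the composition of the $n$-writer with the driver $M_0$ genuinely clean in the model of \cite{Hamkins0}, so that the head-reset-to-the-left and limsup conventions at limit stages neither corrupt the stored input nor the output, and so that the overhead $c$ is honestly independent of $n$. The closure properties needed for $G$ are routine but should be sketched or cited explicitly, since the entire argument rests on $G$ being infinite-time computable.
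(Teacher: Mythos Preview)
Your argument does not prove Corollary~2; it is (a variant of) a proof of Theorem~1. The hypothesis of Corollary~2 is that $f:\n\to\n$ is $sD$-recursive, and at no point are you given that $f\in IT\text{-}\mathcal{C}$ or that anything called $f^*$ exists --- yet your opening step assumes ``$f^*$ is total and $f\in IT\text{-}\mathcal{C}$.'' The entire content of Corollary~2, and the only thing the paper's proof actually does, is precisely the reduction you have skipped: one must show that an $sD$-recursive function $f:\n\to\n$ is in fact ITTM-computable, after which Theorem~1 applies verbatim. The paper accomplishes this by observing that $sD$ is a $\Sigma$-pointclass, invoking Theorem~3D.2(i) of \cite{mosch} to conclude that $Graph(f)\in sD$ (this is exactly where the type-$0$ codomain matters, as the remark following the corollary stresses), and then computing $f(x)$ by slicing the tapes and running countably many copies of a semi-decision procedure for $Graph(f)$ in parallel on the pairs $(x,0),(x,1),(x,2),\ldots$ until one accepts.

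Were your write-up offered instead as an alternative proof of Theorem~1, it would be essentially sound and close in spirit to the paper's, though with different bookkeeping: you use the majorant $G(n)=1+\max_{i\le 2n+1}f^*(i)$ and absorb the additive constant via the doubling trick together with the monotonicity of $\Sigma_\infty$, whereas the paper uses $F(x)=\sum_{i\le x}[f^*(i)+i]^2$, iterates $F$ twice, and absorbs the constant via $F(x)\ge x^2$. The paper also explicitly invokes the one-tape result of \cite{Hamkins1tape} so that the driver may be run on a single tape and then copied between tapes --- addressing exactly the ``clean composition'' worry you flag at the end.
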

\begin{proof}[Proof of Corollary 2]
Let $f: \n\to\n$ be $sD$-recursive. It suffices to show that $f$ is in fact ITTM-computable. First, observe that since $sD$ is clearly a $\Sigma$-pointclass, the graph of $f$ is ITTM-semi-decidable (c.f. Theorem 3D.2(i) from \cite{mosch}). Then by breaking up the tapes of an ITTM into countably many slices, we can run countably many simultaneous instances of an ITTM-semi-decision procedure for $Graph(f)$ to find the value of $n\in\n$ which is witness to the statement ``$(x,n)\in Graph(f)$.'' (Such an $n$ will in fact exist because $f$, being $sD$-recursive, is total.)
\end{proof}
\begin{rem}
The result which we have cited from \cite{mosch} states that if $\Gamma$ is a $\Sigma$-pointclass, then a function $g:\mathcal{X}\to \n$ is $\Gamma$-recursive if and only if $Graph(g)\in \Gamma$. It is well-known that there are instances where this ``if and only if'' fails if $g$ instead has a codomain which is not a type $0$ product space. In the statement of Corollary 2, it was thus crucial that we specified that $f$ had codomain $\n$. 
\end{rem}

Our proof for Theorem 1 is in the spirit of Rad\'o's work, with extra care taken to account for the special features of infinite-time computation.

\begin{proof}[Proof of Theorem 1]
Let $f\in IT\text{-}\mathcal{C}$ with $f^{*}$ total, and define a function $F: \n\to\n$ by $F(x):=\di\sum_{i=0}^{x} [f^{*}(i) + i]^2$, which is readily seen to be ITTM-computable; since $F$ has codomain $\n$, we can assert that, in particular, $F$ is computable via a $1$-tape ITTM (c.f. \cite{Hamkins1tape}). Fix such an $M$ and let $C$ be the number of its non-halting, non-limit states. By construction, we have that
\begin{enumerate}
\item[(a)] $F(x)\geq f^{*}(x)$ for every $x\in\n$.
\item[(b)] $F(x)\geq x^2$ for every $x\in\n$.
\item[(c)] $F$ is strictly increasing on $\n$.
\end{enumerate}

For the purposes of this proof, let us adopt the convention of taking ``roughly'' to mean ``up to an additive constant depending only on $C$.''

Let $x\in\n$ be fixed, but arbitrary. We claim that there is a 3-tape ITTM with roughly $x$ non-halting, non-limit states which, upon being given a blank input tape, writes a sequence of $F(F(x))$ ones on the left-hand portion of its (otherwise blank) output tape and halts. (In other words, the machine outputs ``$F(F(x))$.'')

The proposed ITTM proceeds as follows:

\begin{enumerate}
\item Using roughly $x$ states, write $x$ ones to the left-hand portion of the input tape.
\item Using roughly $C$ states, run an ``M-like'' machine on the input tape to obtain $F(x)$ ones on the left-hand portion of the (otherwise blank) input tape.
\item Using a constant number of states, copy the contents of the input tape to the scratch tape.
\item Using roughly $C$ states, run an ``M-like'' machine on the scratch tape to write $F(F(x))$ ones on the left-hand side of the (otherwise blank) scratch tape.
\item Using a constant number of states, copy the contents of the scratch tape to the output tape to obtain  $F(F(x))$ ones on the left-hand portion of the (otherwise blank) output tape.
\item Halt.
\end{enumerate}

Since this ITTM possesses $s(x):=x+h(C)$ states and outputs $F(F(x))$, we know from the definition of $\Sigma_{\infty}$ that $\Sigma_{\infty} (s(x))\geq F(F(x))$. Also, since $x^2>^{*}x+h(C)=s(x)$ and $F(x)\geq x^2$, it follows that $F(x)>^{*}s(x)$. Thus, by the monotonicity of $F$ on $\n$, $F(F(x))>^{*}F(s(x))$, and so $\Sigma_{\infty}(s(x))>^{*}F(s(x))>^{*} f^{*}(s(x))$ (by (a) above), whence $\Sigma_{\infty}(n)>^{*}f^{*}(n)$.
\end{proof}
\begin{rem}
The result from \cite{Hamkins1tape} that we cited in the preceding proof states that every ITTM-computable function $f$ with codomain $\n$ can be computed via an ITTM with just one tape. It is worth noting that it is not true that \ita{every} ITTM-computable function $f$ is ITTM-computable via a 1-tape ITTM; in order to be able to compute a general ITTM-computable function $f$ , two tapes might be required, or alternatively, one tape augmented with some sort of auxiliary flag or scratch cell. 
\end{rem}

\section*{Conclusions and Future Work}
We have seen that our $\Sigma_{\infty}$ function generalizes Rad\'o's $\Sigma$ function in a natural fashion, and on the basis of the results which we have presented here, we conclude that this infinitary analogue of the busy beaver problem is worthy of further study. In particular, it would be interesting to investigate the question of for which $\Sigma$-pointclasses $\Gamma$, with $sD\subset \Gamma \subset \Delta_{2}^{1}$, we can prove the analogue of Corollary 2 with ``$\Gamma$-recursive'' replacing ``$sD$-recursive.''

\bibliographystyle{amsalpha}
\nocite{*}
\bibliography{BusyBeaverWriteup}
\end{document}